%% file: paper.tex
\documentclass[a4paper]{amsart}

\usepackage[T1]{fontenc}
\usepackage[dvipsnames,svgnames,table]{xcolor}
\usepackage{mathtools}
\usepackage{comment}
\usepackage[inline]{enumitem}
\usepackage{amssymb}
\usepackage[foot]{amsaddr}

\usepackage[unicode=true]{hyperref}
\hypersetup{
    colorlinks,
    linkcolor={RoyalBlue},
    citecolor={RubineRed},
    urlcolor={blue!80!black},
    pdftitle={Relative dimension}
}

\numberwithin{equation}{section}

\newcommand{\jedrzej}[1]{\textcolor{ProcessBlue}{J\k{e}drzej: #1}}
\newcommand{\dominik}[1]{\textcolor{DarkRed}{Dominik: #1}}

\DeclarePairedDelimiter\rsize{\|}{\|_\mathrm{r}}

\input{macros}

\setenumerate{label=\textup{(\roman*)}, noitemsep, topsep=3pt-\parskip,
labelindent=.2em, leftmargin=*, widest=iii,}
\setitemize{noitemsep, topsep=-\parskip, labelindent=.2em, leftmargin=*, widest=iii,}

\author[D. Dürrschnabel]{Dominik Dürrschnabel}
\address[D. Dürrschnabel]{School of Engineering, Baden-Wuerttemberg Cooperative State University (DHBW) Mosbach, Germany}
\email{dominik.duerrschnabel@mosbach.dhbw.de}

\author[J. Hodor]{J\k{e}drzej Hodor}
\address[J. Hodor]{Theoretical Computer Science Department,
  Faculty of Mathematics and Computer Science and  Doctoral School of Exact and Natural Sciences, Jagiellonian University, Krak\'ow, Poland}
\email{jedrzej.hodor@gmail.com}

\author[P. Micek]{Piotr Micek}
\address[P. Micek]{Theoretical Computer Science Department, Faculty of Mathematics and Computer Science, Jagiellonian University, Kraków,
  Poland}
\email{piotr.micek@uj.edu.pl}

\author[G. Stumme]{Gerd Stumme}
\address[G. Stumme]{Knowledge and Data Engineering Group,  Department of Electrical Engineering and Computer Science,  University of Kassel, Germany}
\email{stumme@cs.uni-kassel.de}

\author[W. T. Trotter]{William T.\ Trotter}
\address[W. T. Trotter]{School of Mathematics, Georgia Institute of Technology, Atlanta, Georgia 30332}
\email{trotter@math.gatech.edu}

\thanks{J.~Hodor and P.~Micek are supported by the National Science Centre, Poland under grant UMO-2022/47/B/ST6/02837 within the OPUS 24 program.}

\begin{document}

\title[Relative Dimension of Posets]{Relative Dimension of Posets}

\date{}

\dedicatory{}

\begin{abstract}
  Dimension of partially ordered sets (posets for short) can be seen as a measure of how much space is needed to store posets.
  Refining the definition of dimension, or actually the local dimension, in this respect we obtain the notion of relative dimension.
  We discuss properties of relative dimension and we give bounds for relative dimension of some well-known families of posets.
\end{abstract}

\maketitle

\section{Introduction}

An $n$-element poset $P$ can be represented by its order-relation matrix, which uses $\Oh(n^2)$ bits. In 1941, Dushnik and Miller~\cite{DusMil41} introduced the dimension of a poset. 
If $\dim(P)\le d$, then $P$ can instead be encoded by $d$ linear orders, using $\Oh(dn\log n)$ bits. This is an improvement whenever $d=o(n/\log n)$. The condition need not hold: Erd\H{o}s, Kierstead, and Trotter~\cite{Erdös.1991} proved that almost all labeled $n$-element posets have dimension
$\frac n4-\Theta(n/{\log n})$.
Several variants of classical dimension have been introduced. Boolean dimension, developed in the late 1980s by Ne\v{s}et\v{r}il and Pudl\'{a}k~\cite{NP89}, retains a comparable encoding property and can be substantially smaller than the classical dimension on natural classes of posets. 
Another variant of dimension, which can again be substantially smaller than the classical dimension and has the encoding property, is the local dimension introduced by Ueckerdt in 2016~\cite{Ueckerdt16}; see also~\cite{Trotter.2017,BGT20,Barrera.2020,DBLP:journals/ejc/KimMMSSUW20}. 
We refine local dimension to relative dimension, which better fits the framework of data compression.
Local dimension counts the maximum number of occurrences of an element in the representation of a poset, whereas relative dimension counts the average number. 

We begin with the necessary definitions, then introduce dimension, local dimension, and relative dimension, together with examples illustrating the new parameter. 
\Cref{sec:results} states our main results, 
\Cref{sec:proofs} contains their proofs, and \Cref{sec:open-problems} concludes with open problems.

\subsection{Basic definitions}

For each positive integer $d$, let $\defin{[d]}=\{1,\dots, d\}$, and set $\defin{[0]}=\emptyset$. 
All logarithms in this paper are base $2$.
We use the convention that $\max \emptyset = 1$.

A \defin{partially ordered set}, or \defin{poset}, is a pair $(P,\le)$, 
where $P$ is a finite set and $\le$ is a reflexive, antisymmetric, and transitive relation on $P$. 
Thus, for all $x,y,z\in P$, we have $x\le x$; 
if $x\le y$ and $y\le x$, then $x=y$; and if $x\le y$ and $y\le z$, then $x\le z$.

Two elements $x,y\in P$ are \defin{comparable} if $x\le y$ or $y\le x$; 
otherwise they are \defin{incomparable}, denoted $x\parallel y$. For $S\subseteq P$, the \defin{subposet} of $P$ induced by $S$ has ground set $S$ and satisfies $x\le y$ in $S$ if and only if $x\le y$ in $P$, for all $x,y\in S$. A poset is a \defin{chain} if every pair of elements is comparable and an \defin{antichain} if no two distinct elements are comparable. A \defin{linear extension} of $P$ is a linear order on $P$ that extends its partial order. We write \defin{$|P|$} for the number of elements of $P$.

\subsection{Dimension}

When $d$ is a positive integer, a set $\mathcal{L} = \{L_1,\dots,L_d\}$ of linear extensions of a poset $P$ is called a \defin{realizer} of $P$ when for all $x$ and $y$ in $P$, 
\[\text{$x\le y$ in $P$ \ \ if and only if \ \ $x\le y$ in $L_i$ for each $i\in[d]$.}\]
The \defin{dimension} of a poset $P$, denoted \defin{$\dim(P)$}, is the least size of a realizer that $P$ admits.

\subsection{Local Dimension}

Let $P$ be a poset.
A \defin{partial linear extension} of $P$ is a linear extension of a subposet of $P$.
Let $t$ be a nonnegative integer and let $\mathcal{L} = \{M_1,\dots,M_t\}$ be a set of partial linear extensions of $P$.
The set $\mathcal{L}$ is a \defin{local realizer} of $P$ if for all $x$ and $y$ in $P$ there exists $i \in [t]$ with $x,y \in M_i$, and
\[\text{$x\le y$ in $P$ \ \ if and only if \ \ $x\le y$ in $M_i$ for each $i\in[t]$ such that $x,y \in M_i$.}\]
The \defin{frequency} of a local realizer $\mathcal{L} = \{M_1,\dots,M_t\}$ of $P$ is the maximum over all $x$ in $P$ over the number of occurrences in $\mathcal{L}$, that is,
\[\mathrm{\defin{\|\calL\|}} = \max_{x \in P}|\{i \in [t] : x \in M_i\}|.\]
The \defin{local dimension} of $P$, denoted \defin{$\ldim(P)$}, is the least frequency of a local realizer that $P$ admits.

\subsection{Relative Dimension}
The \defin{relative frequency} of a local realizer $\mathcal{L} =  \{M_1,\dots,M_t\}$ of a poset $P$ is defined to be $0$ when $P$ is empty and otherwise,
\[\mathrm{\defin{\rsize{\calL}}} = \frac{|M_1|+\dots+|M_t|}{|P|}.\]
The \defin{relative dimension} of a poset $P$, denoted $\defin{\rdim(P)}$, is the least relative frequency of a local realizer that $P$ admits.
The key difference between local dimension and relative dimension is that the former measures the maximum number of occurrences of elements of $P$ and the latter focuses on the average number of appearances.

Since every realizer is a local realizer, and the relative frequency of a local realizer is always at most the frequency of the local realizer, for every poset $P$, we have
\[\rdim(P) \leq \ldim(P) \leq \dim(P).\]



Relative dimension is less sensitive than the other parameters to a small, complicated subposet. To illustrate this, fix a nonempty poset $Q$ with $\rdim(Q)>2$, and let $Q_n$ be the disjoint union of $Q$ and an $n$-element chain. Then
$\lim_{n\to\infty}\rdim(Q_n)=2$.
Thus the fixed subposet $Q$ contributes a vanishing proportion of the total representation cost as the chain grows. In particular, relative dimension is not monotone under taking subposets.

\section{Results}\label{sec:results}

We first present two families of posets with unbounded local dimension but bounded relative dimension.

A poset $P$ is an \defin{interval order} if there exists a mapping of elements of $P$ to closed proper intervals in $\mathbb{R}$ such that if elements $x$ and $y$ in $P$ are mapped into $[a_x,b_x]$ and $[a_y,b_y]$ respectively, then $x \leq y$ in $P$ if and only if $b_x < a_y$ in $\mathbb{R}$.
In this paper we assume that an interval order is given with its interval representation and we identify elements of the poset with intervals.
For each positive integer $n$, let \defin{$U_n$}, the \defin{canonical interval order}, be the interval order on all the intervals in $\{[a,b]\mid a,b\in[n] \textrm{ with } a<b\}$. 
The dimension of $U_n$ is $\Theta(\log\log n)$ as proved by~F{\"u}redi, Hajnal, R{\"o}dl, and Trotter~\cite{furedi1991interval}, 
while its local dimension is unbounded as proved by Barrera{-}Cruz, Prag, Smith, Taylor, and Trotter~\cite{Barrera.2020}. 
By contrast, we prove the following uniform bound.

\begin{theorem}
  \label{thm:interval}
  For every positive integer $n$, we have $\rdim(U_n) < 4$.
\end{theorem}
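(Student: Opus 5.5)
The plan is to build an explicit local realizer of $U_n$ in two parts: two global linear extensions, plus a divide-and-conquer family of partial linear extensions whose total size is a geometrically decaying sum. The target is total size strictly less than $4|U_n| = 2n(n-1)$.

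\emph{Global part.} Let $L_1$ order the intervals by left endpoint increasing, and $L_2$ by right endpoint increasing, breaking ties so that nested intervals appear in opposite orders in the two lists. Both are linear extensions, since $x<y$ in $U_n$ means $b_x<a_y$, which forces $a_x<a_y$ and $b_x<b_y$. They realize every comparable pair correctly and reverse every nested incomparable pair. This costs exactly $2|U_n|$. What remains is to reverse every \emph{crossing} pair: $x=[a,b]$, $y=[c,d]$ with $a<c\le b<d$, which appear as $x$ before $y$ in both $L_1$ and $L_2$. For such a pair we need some partial linear extension containing both, with $y$ before $x$, and every added list must stay consistent with the comparabilities among its own elements. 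The remaining budget for these extra lists is strictly below $2|U_n|$.

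\emph{Recursive part.} Split $[n]$ at a midpoint $m$ into the left intervals ($b<m$), the right intervals ($a>m$), and the middle intervals ($a\le m\le b$). The middle intervals form an antichain. A crossing pair either lies inside one half, where it is handled recursively, or involves at least one middle interval. For a crossing pair $x$ left, $y$ middle, with $c<a\le d<m\le b$ in the notation of a left interval $[c,d]$ and middle interval $[a,b]$, I would use a list on the left and middle intervals sorted by the key $d$ for left intervals and $a-\tfrac12$ for middle ones. This is a linear extension: a left interval precedes a middle interval exactly when $d<a$, left intervals are ordered by right endpoint, and the middle intervals form an antichain. It puts the middle interval first precisely in the crossing case. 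A symmetric list handles right/middle pairs, and a single list of the middle intervals sorted by $a$ decreasing handles middle/middle crossings. Writing $T(n)$ for the recursive cost, the middle lists cost a constant times $n^2/4$. If the recursion were $T(n)=2T(n/2)+c\,n^2/4$, then $T(n)\le c\,n^2/2\cdot(1+\tfrac12+\tfrac14+\dots)$ would be proportional to $|U_n|$, and tuning the constants would give the strict bound $<4$.

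\emph{Main obstacle.} The left/middle and right/middle lists as described contain \emph{all} left (respectively right) intervals. Each element would then be charged once per recursion level, for a total of $\Theta(\log n)$. The crux is to charge each left interval only $O(1)$ times over the whole recursion.

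My first attempt would be to include in the level-$m$ list only those left intervals $[c,d]$ whose right endpoint lies in a window near $m$, such as $d\ge m-2^{j}$ at depth $j$. The left intervals with $d$ farther from $m$ would be handled by a middle interval clipped to the sub-range, i.e. pushed down into the recursion on that half, where they behave like intervals ending at the boundary. Alternatively, I would weight the split point unevenly so that the per-element charges form a convergent series.

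Checking that clipping preserves consistency with comparabilities, and computing that the resulting constant keeps the total strictly below $2|U_n|$, is where I expect the real work. Strictness should come from the finite geometric sum, together with the fact that intervals of length $1$ need no extra list.
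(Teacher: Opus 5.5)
Your construction cannot reach the bound, and the obstacle you single out is not the one that matters. Write $P_-$, $X$, $P_+$ for the left, middle and right intervals at a node of your recursion. Every interval, except possibly the $n-1$ unit intervals if you stop early, is a middle interval at exactly one node. There it lies in all three of your lists (left/middle, right/middle, middle/middle), in addition to $L_1$ and $L_2$. So almost every element appears in at least five lists, and your family has relative frequency at least $5-o(1)$. Adding one appearance per ancestor level gives about $6$. Already the top node costs $|P_-|+|P_+|+3|X|\approx n^2\approx 2|U_n|$, which is the whole budget you left for the recursive part. None of this depends on the split point, so no tuning of constants helps. Windowing or clipping the left intervals only affects the ancestor-level term. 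That term, which you call the crux, is actually harmless. Relative dimension is an average, and splitting $U_n[s,t]$ at $\lfloor (s+t)/2\rfloor$ gives $|P_-|+|P_+|\le |X|$. Hence the sub-problems shrink geometrically, and the $\Theta(\log n)$ charges on short intervals average out to $O(1)$.

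The missing idea is to fold the global linear extensions into the recursion. Your left/middle and right/middle lists are exactly the paper's $M_-$ and $M_+$. Drop $L_1$, $L_2$ and the separate middle/middle list, and build one linear extension recursively: $L$ is $L_-$ (a linear extension contained in the family for $P_-$), then $X$ in exactly the reverse of its order in your left/middle list, then $L_+$. This single list does four jobs:
\begin{enumerate}
\item it respects all comparabilities;
\item it places $P_-$ below $X$ below $P_+$, complementing $M_-$ and $M_+$;
\item it reverses every pair in $X$, nested pairs included, relative to $M_-$;
\item it replaces $L_-$ and $L_+$ inside the recursive families.
\end{enumerate}
An element that is middle at depth $j$ now appears $j+3$ times instead of $j+5$. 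The total size satisfies $f(P)=f(P_-)+f(P_+)+|P_-|+|P_+|+3|X|$. Using the induction hypothesis $f(Q)<4|Q|$ for nonempty $Q$, together with $|P_-|+|P_+|\le|X|$ and $X\neq\emptyset$, this gives $f(P)<4|P|$. This is the paper's argument: the lemma on dense interval orders, combined with a short computation showing that $U_n$ is dense.
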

We do not know whether the relative dimension of all interval orders is bounded; see \Cref{sec:open-problems}. 

For a nonnegative integer $n$, the \defin{Boolean lattice} of \defin{order} $n$, denoted \defin{$\calB_n$}, is the poset with the ground set consisting of all subsets of $[n]$ ordered by inclusion.
For positive integers $s$ and $t$ with $ s < t \leq n$, we define $\calB_n(s,t)$ as the poset $\calB_n$ restricted to subsets of $[n]$ of cardinality $s$ and $t$.
Following the literature on the other variants of poset dimension, we denote by \defin{$\rdim(s,t;n)$} the value $\rdim(\calB_n(s,t))$.
For a fixed positive integer~$k$, the local dimension of posets in the family $\{\calB_n(1,k) : k \leq n\}$ is unbounded~\cite{Barrera.2020}.
Note that the poset $\calB_n(1,2)$ is isomorphic to the incidence poset of a complete graph on $n$ vertices.\footnote{An \defin{incidence poset} of a graph $G$ is a poset with the set of minimal elements $V(G)$ and the set of maximal elements $E(G)$. The minimal elements form an antichain and the maximal elements form an antichain. We have $v < e$ for $v \in V(G)$ and $e \in E(G)$ whenever $e$ is incident to $v$.}

\begin{theorem}
  \label{thm:incidence_orders}
  For all positive integers $k,n$ with $2\leq k \leq n$, we have
  $\rdim(1,k;n) < k+3$.
\end{theorem}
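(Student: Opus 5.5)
The plan is to build an explicit local realizer of $\calB_n(1,k)$ of small total size. The construction uses two ``global'' orders, one order on the $k$-sets alone, and, for each $j\in[n]$, one order that handles gaps starting at $j$.

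Write $\{x\}$ simply as $x$. For a $k$-set $A=\{a_1<\dots<a_k\}$ and $x\notin A$, the pair is incomparable. The condition we must meet is the following, which follows directly from the definition:
\begin{enumerate}
\item for $x\in A$, every order containing both must put $x$ before $A$;
\item for $x\notin A$, at least one order must put $A$ before $x$;
\item every pair occurs together somewhere;
\item two distinct $k$-sets, or two distinct singletons, must occur in both relative orders.
\end{enumerate}
Note that putting $x$ before $A$ in some orders is harmless when $x\notin A$.

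\emph{Construction.}
\begin{itemize}
\item $L$: singletons $1<2<\dots<n$, with every $A$ inserted immediately after the singleton $\max A$. Sets with the same maximum are placed in a block in some fixed order.
\item $L'$: singletons $n<n-1<\dots<1$, with every $A$ inserted immediately after $\min A$.
\item $K$: all $k$-sets only, in the reverse of their order in $L$.
\item For each $j\in[n]$, an order $H_j$. It contains the singletons $n,n-1,\dots,j+1$ in descending order, together with every $A$ such that $j=a_t$ for some $t<k$. Such an $A$ is inserted immediately after the singleton $a_{t+1}$, its successor of $j$.
\end{itemize}

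\emph{Verification.} Condition (i): in $L$, every member of $A$ is at most $\max A$, so it precedes $A$. In $L'$, every member is at least $\min A$, so again it precedes $A$. In $H_j$, the members of $A$ present are exactly those $>j$, and all are $\ge a_{t+1}$, so they precede $A$ in the descending order.

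Condition (ii): if $x>\max A$, then $L$ puts $A$ first. If $x<\min A$, then $L'$ does. If $a_t<x<a_{t+1}$, then in $H_{a_t}$ the set $A$ sits right after $a_{t+1}$ and before $x$.

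Conditions (iii) and (iv): $L$ and $L'$ contain everything, and they reverse the singletons. $L$ and $K$ reverse every pair of $k$-sets.

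\emph{Counting.} Each $k$-set appears in $L$, $L'$, $K$, and in $k-1$ orders $H_j$, so $k+2$ times in total. Singletons contribute $2n$ from $L$ and $L'$, plus $\sum_j (n-j)=n(n-1)/2$ from the $H_j$. Writing $C=\binom nk$, we have $|P|=n+C$, and it suffices that
\[
(k+2)C+2n+\tfrac{n(n-1)}2<(k+3)(n+C),
\]
that is, $\tfrac{n(n-1)}2<(k+1)n+C$.

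The main obstacle is this inequality for all $2\le k\le n$, because the gap-handling orders cost about $n^2/2$ singleton slots. We split into cases.
\begin{itemize}
\item If $2\le k\le n-2$, then $C\ge\binom n2=n(n-1)/2$, and the inequality holds since $(k+1)n>0$.
\item If $k\in\{n-1,n\}$, then $(k+1)n\ge n^2>n(n-1)/2$.
\end{itemize}
This gives $\rdim(1,k;n)<k+3$.
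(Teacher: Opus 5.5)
Your construction is the paper's: $L$, $L'$, $K$ are its $L_1$, $L_2$, $K$, and your $H_j$ are its gap orders $M_\ell$ in a slightly more generous form. The paper only puts $X$ into $M_\ell$ when $\ell+1\notin X$ and omits the singleton $\{n\}$; this costs you $n-1$ extra singleton slots but nothing essential. Your explicit case split for $\tfrac{n(n-1)}{2}<(k+1)n+\binom{n}{k}$ spells out the final inequality, which the paper leaves implicit.

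One small correction. For $x\notin A$ the definition of a local realizer also requires some order with $x$ before $A$, so this is necessary, not merely "harmless" as your list of conditions suggests. It is supplied by $L$ when $x\le\max A$, and by $L'$ otherwise, since then $x>\max A\ge\min A$.
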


We next give two settings in which relative dimension is unbounded.

Let $n$ be a positive integer and $p$ be a real number with $0 < p < 1$.
We fix two disjoint $n$-element sets $A_n$ and $B_n$.
A poset $P$ is \defin{$n$-bipartite} if its ground set is $A_n \cup B_n$, the elements in each of $A_n$ and $B_n$ are pairwise incomparable in $P$, and for all $a \in A_n$ and $b \in B_n$, either $a < b$ in $P$ or $a \parallel b$ in $P$.
We denote by $R(n,p)$ a random variable whose values are $n$-bipartite posets drawn from the following probability distribution.
For all $a \in A_n$ and $b \in B_n$, we set $a < b$ in $R(n,p)$ with probability $p$ and the trials for each pair of elements are independent of each other.
Kim et al.~\cite{DBLP:journals/ejc/KimMMSSUW20} proved that $\ldim(R(n,p)) = \Theta(n / \log n)$ a.a.s.\footnote{The abbreviation \q{a.a.s.} stands for \q{asymptotically almost surely}. Given a random variable $X_n$ and an event $E_n$ for each positive integer $n$, we say that $X_n$ \defin{a.a.s.} satisfies $E_n$ if $\lim_{n \rightarrow \infty} P(X_n \in E_n) = 1$.}

\begin{theorem}
  \label{thm:random}
  For a fixed $p$ with $0 < p < 1$, we have $\rdim(R(n,p)) = \Theta(n / \log n )$ asymptotically almost surely.
\end{theorem}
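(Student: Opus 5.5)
The upper bound is immediate from the inequality $\rdim(P)\le\ldim(P)$ together with the result of Kim et al.~\cite{DBLP:journals/ejc/KimMMSSUW20} that $\ldim(R(n,p))=\Oh(n/\log n)$ a.a.s. So the work lies entirely in the lower bound $\rdim(R(n,p))=\Omega(n/\log n)$ a.a.s. For this I will use a counting (encoding) argument in the spirit of the introduction: a local realizer of small relative frequency is a short description of the poset, while $R(n,p)$ has large entropy.

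\textbf{Encoding step.} Take an $n$-bipartite poset $P$ and a local realizer $\calL=\{M_1,\dots,M_t\}$ with total size $S=|M_1|+\dots+|M_t|=\rsize{\calL}\cdot 2n$. First I normalise. We may discard every $M_i$ with $|M_i|\le 1$, since such an $M_i$ covers no pair. After that $t\le S/2$. Each $M_i$ is then specified by the sequence of its elements, listed in the order of $M_i$. The whole family is encoded as the concatenation of these sequences, with separators, using at most $S\lceil\log(2n+1)\rceil+\Oh(S)$ bits. Since $\calL$ determines $P$ (every pair lies in some $M_i$, and comparability is read off from the $M_i$), the number of $n$-bipartite posets admitting a local realizer of total size at most $S$ is at most $2^{\Oh(S\log n)}$.

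\textbf{Probability step.} For fixed $0<p<1$, every specific $n$-bipartite poset has probability at most $\max(p,1-p)^{n^2}=2^{-cn^2}$ with $c=-\log\max(p,1-p)>0$. Set $S=\varepsilon n^2/\log n$ with $\varepsilon$ small. By a union bound,
\[
\Pr\bigl[\rdim(R(n,p))\le \varepsilon n/(2\log n)\bigr]\le 2^{\Oh(\varepsilon n^2)}\cdot 2^{-cn^2},
\]
and this tends to $0$ once $\varepsilon$ is small relative to $c$ and the hidden constant. This gives the lower bound a.a.s.

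\textbf{Main obstacle.} The delicate point is making the encoding honest. It must charge $\Oh(\log n)$ bits per occurrence, with no hidden dependence on $t$ (handled by discarding singletons) or on which realizer is chosen. The decoding must also recover the poset uniquely, including which of the $2n$ labelled elements lie in $A_n$ versus $B_n$. This is fixed, since the ground set is $A_n\cup B_n$ with labels, so the order in each $M_i$ determines every comparability. I expect no further difficulty: the argument is essentially the entropy bound behind the Kim et al. lower bound, but it is applied to the total size, which is why the average rather than the maximum suffices.
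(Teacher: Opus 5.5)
Your proof is correct. For the lower bound you take a genuinely different route from the paper; the upper bound is handled the same way, via $\rdim\le\ldim$ and Kim et al.

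\textbf{The paper's route.} It argues structurally. First it shows that a.a.s.\ $R(n,p)$ has balanced independence number below $c\log n$ (first moment) and at least $(1-p)n^2/2$ relevant incomparable pairs (Chebyshev). It then proves deterministically that any $n$-bipartite poset with these two properties has relative dimension at least $\frac{1-p}{4c}\cdot\frac{n}{\log n}$. The key step is a double count of reversals. Every relevant incomparable pair $(a,b)$ must satisfy $b<a$ in some $M\in\calM$. Within a single $M$, the reversed relevant pairs form a staircase: if $b_j<a_i$ in $M$, then every pair formed by one of the $i$ highest elements of $A_n\cap M$ and one of the $j$ lowest elements of $B_n\cap M$ is incomparable. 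The small balanced independence number therefore caps the reversals in $M$ at $|M|\cdot c\log n$.

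\textbf{Your route.} You replace all of this with a compression count. At most $2^{\Oh(S\log n)}$ posets are describable by a realizer of total size $S$, while each individual outcome of $R(n,p)$ has probability at most $\max(p,1-p)^{n^2}$.

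\textbf{Comparison.} Your argument has several advantages:
\begin{itemize}
\item It is shorter and needs neither structural lemma.
\item It gives failure probability $2^{-\Omega(n^2)}$, instead of the polynomially small bound coming from Chebyshev.
\item It applies verbatim to any model whose outcomes all have probability $2^{-\Omega(n^2)}$. For instance, it shows that almost all labeled $n$-element posets have relative dimension $\Omega(n/\log n)$.
\item It is exactly the data-compression intuition from the introduction, made rigorous.
\end{itemize}
The paper's route buys an explicit constant. More importantly, it gives a deterministic certificate: checkable properties of a specific poset that force large relative dimension, which a pure counting argument cannot provide.
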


For the last result, we need the following basic property of relative dimension (note that this property also holds for dimension and local dimension~\cite{DBLP:journals/ejc/KimMMSSUW20}).
For two nonempty posets $P$ and $Q$, we consider $P \times Q$ ordered by the \defin{product} order.
Namely, for all $(p,q),(p',q') \in P \times Q$, we have $(p,q) \leq (p',q')$ in $P \times Q$ if and only if $p \leq p'$ in $P$ and $q \leq q'$ in $Q$.

\begin{lemma}
  \label{thm:subadd}
  For any two nonempty posets $P$ and $Q$, $\rdim(P \times Q) \leq \rdim(P) + \rdim(Q)$.
\end{lemma}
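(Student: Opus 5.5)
Take a local realizer $\calL_P=\{M_1,\dots,M_s\}$ of $P$ with $\rsize{\calL_P}=\rdim(P)$ and a local realizer $\calL_Q=\{N_1,\dots,N_t\}$ of $Q$ with $\rsize{\calL_Q}=\rdim(Q)$. The plan is to build a local realizer of $P\times Q$ whose total size is at most $|Q|\sum_i|M_i|+|P|\sum_j|N_j|$. Dividing by $|P\times Q|=|P||Q|$ then gives exactly $\rdim(P)+\rdim(Q)$.

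\textbf{Construction.} For each $i\in[s]$, let $M_i'$ be the set $M_i\times Q$, linearly ordered lexicographically. Compare first coordinates by $M_i$. Break ties in the first coordinate using a fixed linear extension $L_Q$ of $Q$. Symmetrically, for each $j\in[t]$, let $N_j'$ be $P\times N_j$, ordered first by $N_j$ on the second coordinate and then by a fixed linear extension $L_P$ of $P$ on the first. Each $M_i'$ is a linear extension of the subposet it induces. Indeed, if $(p,q)\le(p',q')$ with $p,p'\in M_i$, then either $p<p'$ in $P$, hence in $M_i$, or $p=p'$ and $q\le q'$, hence $q\le q'$ in $L_Q$. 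The same holds for each $N_j'$. The sizes are $|M_i'|=|M_i||Q|$ and $|N_j'|=|P||N_j|$, as required.

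\textbf{Covering.} Take two elements $(p,q),(p',q')$. Some $M_i$ contains both $p$ and $p'$, so $M_i'$ contains both pairs. (If $p=p'$, any $M_i$ containing $p$ works; such an $M_i$ exists by the covering condition applied to the pair $(p,p)$.)

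\textbf{Realizing.} The main check is the "if" direction of the realizer condition. Suppose $(p,q)\le(p',q')$ holds in every $M_i'$ and every $N_j'$ that contains both elements. We must show $p\le p'$ and $q\le q'$.
\begin{itemize}
\item First suppose $p\ne p'$. Every $M_i$ containing $p,p'$ gives $M_i'$ containing both pairs, and $(p,q)\le(p',q')$ in $M_i'$ forces $p<p'$ in $M_i$, since equality is excluded. So $p\le p'$ in all such $M_i$, and therefore $p\le p'$ in $P$ because $\calL_P$ is a local realizer.
\item If $p=p'$, then $p\le p'$ trivially.
\item Symmetrically, $q\le q'$ follows when $q\ne q'$. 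If $q=q'$ it is trivial.
\end{itemize}
Hence $(p,q)\le(p',q')$ in $P\times Q$. The converse holds because each set is a linear extension of its induced subposet.

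The only subtle point is the tie-breaking. Without it, pairs sharing a coordinate would be incomparable within $M_i'$ and the sets would not be linear orders. The tie-breaking by a genuine linear extension of the other factor is what keeps every set consistent with the product order. The tie-breaking never causes a false comparability, because ties are resolved only when one coordinate is equal, and in that case the other family of sets decides the comparison.
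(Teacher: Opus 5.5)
Your proof is correct and uses the same construction as the paper: replace each $M_i$ by $M_i\times Q$ ordered lexicographically with ties broken by a fixed linear extension of $Q$ (and dually for each $N_j$), giving total size $|Q|\sum_i|M_i|+|P|\sum_j|N_j|$. The only difference is in the verification, and it is minor. You check the realizer condition directly, coordinate by coordinate: if $(p,q)\le(p',q')$ in every set containing both, then $p\le p'$ and $q\le q'$. The paper instead exhibits, for each incomparable pair, sets ordering it both ways, using a case split on whether the coordinates are comparable.
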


To wrap up, we provide a constructive way of obtaining a family of posets with unbounded relative dimension.
It suffices to take the family of Boolean lattices.
We additionally provide a non-trivial upper bound on the relative dimension of Boolean lattices.

\begin{theorem}\label{thm:boolean}
We have $\rdim(\calB_n) \rightarrow \infty$ as $n \rightarrow \infty$. 
Additionally, $\rdim(\calB_n)\leq  \frac{2}{3}n + \frac{1}{3}$ for every positive integer $n$.
\end{theorem}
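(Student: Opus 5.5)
The theorem has two parts, and I would prove them separately. The upper bound reduces via \Cref{thm:subadd} to a small computation. Unboundedness reduces, by an averaging/symmetry argument, to a subposet whose relative dimension is known to be large.

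\textbf{Upper bound.} Since $\calB_a\times\calB_b\cong\calB_{a+b}$, \Cref{thm:subadd} gives $\rdim(\calB_{a+b})\le\rdim(\calB_a)+\rdim(\calB_b)$. So it suffices to establish three base values:
\begin{itemize}
\item $\rdim(\calB_1)\le 1$, using the single chain $\emptyset<\{1\}$;
\item $\rdim(\calB_2)\le 3/2$, using the linear extension $\emptyset,\{1\},\{2\},[2]$ together with the partial linear extension $\{2\}<\{1\}$, of total size $6$;
\item $\rdim(\calB_3)\le 2$, which needs a local realizer of $\calB_3$ of total size $16$.
\end{itemize}
Writing $n=3m+r$ with $r\in\{0,1,2\}$, these give $\rdim(\calB_n)\le 2m$, $2m+1$ and $2m+3/2$ respectively. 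Each is at most $\frac23 n+\frac13$.

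For $\calB_3$ I would start with one full linear extension of size $8$, for example $\emptyset,1,2,3,12,13,23,123$. The remaining incomparable pairs must then be reversed by partial linear extensions whose total size is $8$. These pairs are the pairs within levels $1$ and $2$, together with pairs such as $\{3\}$ versus $\{1,2\}$. A natural attempt is the cyclically reversed chains $3,2,1$ and $23,13,12$. I would then try to absorb the pairs $\{i\}\parallel\{j,k\}$ into these chains, using the fact that in a partial linear extension a single chain can reverse several pairs at once. The key constraint is that every pair of elements occurring together in some $M_i$ must be ordered correctly there. This finite search is the one genuinely fiddly point of the upper bound. If total size $16$ turns out to be impossible, I would instead look for a total size of at most $56/3$ on $\calB_3$, or a cleverer base case such as $\calB_6$ with total at most $4\cdot 64$.

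\textbf{Unboundedness.} Relative dimension is not monotone under subposets, so I would use the symmetry of $\calB_n$. Let $\calL=\{M_1,\dots,M_t\}$ be a local realizer of $\calB_n$. For any subposet $Q\subseteq\calB_n$, the restrictions $\{M_i\cap Q\}$ form a local realizer of $Q$. Now average over the images $\sigma(Q)$, where $\sigma$ ranges over permutations of $[n]$. If $Q$ lies in levels $j$ and $l$, each element $x$ of level $j$ lies in $\sigma(Q)$ for exactly a $|Q\cap\text{level } j|/\binom nj$ fraction of the permutations $\sigma$, and similarly for level $l$. Summing the bound $\sum_i|M_i\cap\sigma(Q)|\ge\rdim(\sigma(Q))\,|Q|$ over $\sigma$ then yields that the total size of $\calL$ restricted to levels $j$ and $l$ is at least about $\rdim(Q)$ times the size of those levels. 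This holds provided $Q$ meets both levels in proportion to their sizes. Partitioning the middle levels of $\calB_n$, which carry almost all elements, into pairs of adjacent levels then gives $\rsize{\calL}\ge(1-o(1))\min_Q\rdim(Q)$.

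It remains to find suitable $Q$ with large relative dimension inside two levels of $\calB_n$. By \Cref{thm:random}, there is an $N$-bipartite poset $R$ with $\rdim(R)=\Omega(N/\log N)$. I would embed $R$ into two levels of $\calB_k$ for $k=O(N)$, by sending $b\in B_N$ to its down-set in $A_N$ padded with private elements to a common size, and $a\in A_N$ to a suitable singleton-type set. Then I would replicate the embedding via a product structure $\calB_n=\calB_k\times\calB_{n-k}$, so that the copies cover the chosen levels proportionally. The step I expect to be the main obstacle is exactly this balancing: making the embedded copies meet the two levels in proportions matching $\binom nj$ and $\binom nl$. This matters because incomparable pairs between the two sides must be witnessed within the restriction. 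If exact balancing fails, I would settle for a constant-factor loss, which still gives divergence.
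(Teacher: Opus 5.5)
\textbf{Upper bound: the $\calB_3$ base case is missing.} Your reduction is the paper's: subadditivity plus the base cases $\calB_1,\calB_2,\calB_3$. Your realizers for $\calB_1$ and $\calB_2$ and the final arithmetic are correct. The gap is that the whole bound rests on $\rdim(\calB_3)\le 2$, which you leave as an unfinished search, and the concrete search you describe cannot succeed.

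Suppose the base extension is the level order $\emptyset,1,2,3,12,13,23,123$. Then the three reversals $\{1,2\}<\{3\}$, $\{1,3\}<\{2\}$, $\{2,3\}<\{1\}$ must lie in three different partial extensions. Indeed, any two of them close a cycle with covering relations, e.g.\ $\{2\}<\{1,2\}<\{3\}<\{1,3\}<\{2\}$. Checking what else each of these three chains can reverse shows that the additional load is at least $10$, so this route gives only $\rdim(\calB_3)\le 9/4$.

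Your fallbacks do not give the stated bound either. The bound $\rdim(\calB_3)\le 7/3$ yields only $\rdim(\calB_6)\le 14/3>13/3$, and $\frac79 n$ asymptotically.

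The missing witness comes from starting with the binary-counting extension $\emptyset<1<2<12<3<13<23<123$, in which $\{1,2\}<\{3\}$ already holds. Adding the partial extensions $3<2<23<1<13<12$ and $13<2$ gives total size $8+6+2=16$. This is the paper's realizer.

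\textbf{Unboundedness: a different route, which needs two corrections.} The paper argues as follows. It finds $2^{n-1}$ elements that lie in fewer than $2\rsize{\calM}$ extensions. It then uses Gunderson--R\"odl--Sidorenko to locate a copy of $\calB_d$, with $d\approx\log\log n$, among these elements. Finally it applies $\ldim(\calB_d)\ge d/(2e\log d)$. Your symmetrization over permutations of $[n]$ is sound, but the sketch needs two corrections.

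First, you cannot pair adjacent levels. Two $j$-sets have at most one common $(j+1)$-superset, whereas two elements of $A_N$ in $R(N,p)$ typically have about $p^2N$ common upper bounds. Your own embedding puts $A_N$ at level $1$ and $B_N$ at level $s$, so you should pair level $j$ with level $j+s-1$.

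Second, no replication, product structure or delicate balancing is needed. Translate a single copy by a fixed $(j-1)$-set disjoint from its $k$ ground elements. This gives $Q\cong R$ with $N$ elements on each of the two levels. If both levels lie within $\sqrt n$ of $n/2$, their sizes agree up to a constant factor. Your averaging then already shows that the load on the two levels is at least a constant times $\rdim(R)$ times their size. Now group the levels within $\sqrt n$ of $n/2$ into blocks of $2(s-1)$ consecutive levels and pair levels inside each block. This covers a constant fraction of $\calB_n$.

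\textbf{What your route buys.} Padding from a shared pool, plus one private element per $b\in B_N$, allows $s=N+1$ and $k=3N$. Hence $N\approx\sqrt n/4$ is admissible, and the argument gives $\rdim(\calB_n)=\Omega(\sqrt n/\log n)$. This is far stronger than the paper's $\Omega(\log\log n/\log\log\log n)$, and it avoids Gunderson--R\"odl--Sidorenko. The cost is that it relies on the lower-bound half of \Cref{thm:random}, whereas the paper's route needs only the local-dimension bound for $\calB_d$.
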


Note that the exact asymptotic behavior of $\ldim(\calB_n)$ is not known.
The best-known lower bound is $\Omega(n / \log n)$ as proved by Kim et al.~\cite{DBLP:journals/ejc/KimMMSSUW20} and $\ldim(\calB_n) < n$ if $n \geq 4$ as proved by Hodor and Sordyl~\cite{HSlocal}.
On the other hand, $\dim(\calB_n) = n$ for every positive integer $n$.



\section{Proofs}\label{sec:proofs}

\subsection{Interval orders}
In this subsection, we prove~\Cref{thm:interval}.
Before proving the theorem, we state several auxiliary definitions and we prove a technical lemma containing the main idea of the proof.
For an interval order $P$ and real numbers $s$ and $t$ with $s \leq t$, we define the poset $P[s,t]$ as a subposet of $P$ induced by all intervals $[a,b]$ in $P$ with $s \leq a < b \leq t$.
We say that an interval order is \defin{dense} if for all real numbers $s$ and $t$ with $s < t$, the poset $P[s,t]$ contains an antichain of size at least $\frac{1}{2}|P[s,t]|$.
In particular, if an interval order $P$ is dense, then $P[s,t]$ is dense for all real $s$ and $t$ with $s < t$.

\begin{lemma}\label{lemma:dense_interval_orders}
  Let $P$ be a dense interval order.
  Then, there exists a local realizer $\mathcal{M}$ of $P$ of relative frequency less than $4$ such that either $\mathcal{M}$ is empty or $\mathcal{M}$ contains a linear extension of $P$.
  In particular, $\rdim(P) < 4$.
\end{lemma}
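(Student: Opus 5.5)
Argue by strong induction on $|P|$. If $P$ is empty, take $\mathcal{M}=\emptyset$. Otherwise pick a point $m$ that splits the intervals of $P$ roughly in half; say $m$ is a median of the set of endpoints. Partition $P$ into three parts:
\begin{itemize}
\item $L$, the intervals with $b<m$, that is, $P[-\infty,m)$;
\item $R$, the intervals with $a>m$;
\item $C$, the intervals containing $m$.
\end{itemize}
The set $C$ is an antichain, since all its intervals share the point $m$. Every element of $L$ is below every element of $R$.

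Apply induction to $L$ and $R$ to get local realizers $\mathcal{M}_L$ and $\mathcal{M}_R$, each empty or containing a linear extension, with $\rsize{\mathcal{M}_L}<4$ and $\rsize{\mathcal{M}_R}<4$. Both $L$ and $R$ are of the form $P[s,t]$, so they are dense. Pair up the members of $\mathcal{M}_L$ with those of $\mathcal{M}_R$ and concatenate each pair as $M\oplus M'$, with $M$ placed below $M'$. Put the two linear extensions together, so that the result is a linear extension of $L\cup R$. This handles all pairs inside $L$, all pairs inside $R$, and all pairs between $L$ and $R$ (they are comparable, and both lie in the combined extension). If one side has more members than the other, keep the extra ones as they are.

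It remains to handle the elements of $C$. First, fix two linear extensions of $P$ that reverse each other on $C$. For instance, order by left endpoint in one and by right endpoint in the other, with ties broken so that $C$ is reversed. Then insert $C$ into the combined linear extension of $L\cup R$. This makes it a linear extension $E_1$ of all of $P$, which gives the required "contains a linear extension" property. Add one more linear extension $E_2$ of $P$ that reverses $C$ and also correctly orders each $c\in C$ against the elements of $L\cup R$ that are comparable to it.

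For incomparable pairs $(c,x)$ with $c\in C$ and $x\in L\cup R$, I would use the interval structure. Say $x\in L$ and $x\parallel c$, so $a_c\le b_x<m$. Order by right endpoint in one extension and by left endpoint in the other. One then checks whether $x<c$ in one and $c<x$ in the other. This may fail for nested intervals, and then a third extension is needed. So I budget a cost of about $3|C|$ plus the lengths of $E_1$ and $E_2$ restricted to $L\cup R$.

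The main obstacle is the accounting. Adding full linear extensions of $P$ costs $|P|$ each, so at every recursion level $L\cup R$ gets charged again. Charged naively, this gives $O(\log|P|)$ in total, not a constant. Density is the intended fix: because $C$ is an antichain at the median, choosing $m$ well (for instance, the point covered by the largest antichain of intervals) should make $|C|\ge\frac12|P|$. Then the extra extensions can be restricted to $C$ together with only the elements needed for the incomparabilities. The recurrence becomes $F(P)\le F(L)+F(R)+\alpha|C|+|L|+|R|$. Here the added $|L|+|R|$ comes only from the single reused extension, which is merged into the induction's own extension rather than added as a new one. Solving $F(P)<4|P|$ using $|C|\ge|P|/2$ needs $\alpha|C|<4|C|-(\text{overhead})$.

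The hardest part is to design the $C$-versus-$(L\cup R)$ incomparability covers so that each element of $L\cup R$ is charged only a geometrically decreasing amount. I would try this first: place $C$ into the two extensions $E_1$ and $E_2$, inherited from the two sides, without creating new copies of elements of $L\cup R$. This makes the overhead depend only on $|C|$ and gives the bound $<4$.
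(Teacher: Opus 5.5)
\textbf{Gap: the extensions covering $C$ against $L\cup R$ are never constructed.}

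Your skeleton is the paper's. You take a maximum antichain and, by Helly, a point $m$ common to it (a median of the endpoints would give no control on $|C|$). You split $P$ into $L$, $C$, $R$ with $|L|+|R|\le|C|$, recurse on $L$ and $R$, and reuse the two inductive linear extensions by concatenating them with $C$ inserted in between. What is missing is the step that carries the proof: new partial linear extensions that reverse $C$ and reverse every incomparable pair between $C$ and $L\cup R$, at controlled cost.

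Your candidate orders do not work:
\begin{itemize}
\item Sorting by left endpoint fails for $x\in L$, $c\in C$ with $a_x<a_c\le b_x$. These are crossing pairs; the nested pairs are actually handled.
\item Left- versus right-endpoint order does not reverse $C$. For pairwise intersecting, non-nested intervals the two orders agree.
\item You leave open whether a third extension is needed and what it would contain.
\item Your fallback, inserting $C$ into inherited extensions without new copies of $L\cup R$, is unjustified. If $b_x<a_c\le b_{x'}$, then $x<c<x'$ is forced in any member that puts $c$ below $x'$. So the elements of $L$ must be arranged by their right endpoints relative to the left endpoints in $C$, and an arbitrary inherited extension of $L$ need not respect this.
\end{itemize}

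\textbf{Why this matters: the budget is tight.} Suppose $C$ lies in $\alpha$ members and each element of $L\cup R$ gets $\beta$ new copies. You then need $\beta(|L|+|R|)\le(4-\alpha)|C|$, with only $|L|+|R|\le|C|$ available, so $\alpha+\beta\le 4$. Your remark that the extra $|L|+|R|$ ``comes only from the single reused extension'' has it backwards: the reused extension costs nothing new.

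One new member cannot serve both sides if the merged extension has $L<C<R$. Take $c_1=[1,10]$, $c_2=[3,6]$, $x=[0,2]$, $y=[8,12]$ and $m=5$. Reversing $(x,c_1)$ forces $c_1<x<c_2$. Reversing $(c_1,y)$ forces $c_2<y<c_1$. These contradict each other.

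\textbf{The missing construction (the paper's).}
\begin{itemize}
\item $M_-$ on $L\cup C$ lists $C$ by increasing left endpoint. It inserts each $x\in L$ as high as possible, i.e.\ directly below the first $c$ with $a_c>b_x$. Then $c<x$ in $M_-$ whenever $x\parallel c$.
\item $M_+$ on $C\cup R$ is the mirror image.
\item In the merged linear extension, $C$ sits between $L$ and $R$ in the reverse of its order in $M_-$. This reverses $C$ against $M_-$ and gives $x<c<y$ for all the remaining pairs.
\end{itemize}
This gives $\alpha=3$ and $\beta=1$, and the induction closes because $|L|+|R|\le|C|$.
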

\begin{proof}
  We proceed by induction on the number of elements of $P$.
  If $P$ has no elements, then the assertion holds trivially; thus, assume that $P$ has at least one element.
  Let $X$ be an antichain in $P$ of size at least $\frac{1}{2}|P|$.
  We additionally require $X$ to be inclusionwise maximal.
  By the Helly property, there exists $m \in \mathbb{R}$ such that $m \in [a,b]$ for every $[a,b]$ in $X$.
  Let $s$ and $t$ be integers such that $P = P[s,t]$.
  Let $m_-$ be the greatest real number that is the right endpoint of an interval in $P$ with $m_- < m$, and let $m_+$ be the least real number that is the left endpoint of an interval in $P$ with $m < m_+$.
  Note that $m_-$ or $m_+$ may not exist, and in this case, we set them to $s$ or $t$, respectively.
  Let $P_- = P[s,m_-]$ and $P_+ = P[m_+,t]$.
  Note that $X$, the ground set of $P_-$, and the ground set of $P_+$ form a partition of the ground set of $P$.
  Since $P$ has at least one element, $X$ has at least one element, and so, $P_-$ and $P_+$ have both strictly fewer elements than $P$.
  Therefore, by induction, there are $\mathcal{M}_-$ and $\mathcal{M}_+$, local realizers of $P_-$ and $P_+$ respectively, both of relative frequency less than $4$.
  Let $L_-$ and $L_+$ be linear extensions in $\mathcal{M}_-$ and $\mathcal{M}_+$ respectively.
  If either of the realizers is empty, then the corresponding poset is empty, and we just define $L_-$ or $L_+$ as an empty linear extension.

  Let $M_-'$ be a linear order of elements of $X$ where left endpoints sort them, and ties are resolved arbitrarily.
  More precisely, for all $[a,b],[c,d]$ in $X$, if $a < c$, then $[a,b] < [c,d]$ in $M_-'$.
  Next, we insert all the elements of $P_-$ into $M_-'$ as high as possible, obtaining $M_-$. 
  More precisely, for all $[a,b]$ in $P_-$ and $[c,d]$ in $X$, if $[a,b] < [c,d]$ in $M_-$, then $b < c$.
  We symmetrically construct $M_+$. 

  Next, let $L_X$ be $M_-'$ reversed, and let $L$ be the concatenation of $L_-$, $L_X$, and $L_+$.
  In particular, $L$ is a linear extension of $P$.
  Finally, let
  \[\mathcal{M} = \left(\mathcal{M}_- \backslash \{L_-\}\right) \cup \left(\mathcal{M}_+ \backslash \{L_+\}\right) \cup \{M_-,M_+,L\}.\]
  We claim that $\mathcal{M}$ is a local realizer of $P$ of relative frequency less than $4$.
  Let us first verify the latter part of the claim.
  Note that $|P_-| + |P_+| \leq \frac{1}{2}|P| \leq |X|$.
  We have
  \begin{align*}
    \rsize{\calM} \cdot |P| & = \left(\rsize{\calM_-} \cdot |P_-| - |P_-|\right) + \left(\rsize{\calM_+} \cdot |P_+| - |P_+|\right) + |M_-| + |M_+| + |L| \\
                            & < 3|P_-| + 3|P_+| + (|P_-| + |X|) + (|P_+| + |X|) + (|P_-| + |P_+| + |X|)                                                  \\
                            & = 4(|P_-| + |P_+|) + 3|X| + (|P_-| + |P_+|)                                                                                 \\
                            & \leq 4(|P_-| + |P_+| + |X|) = 4|P|.
  \end{align*}
  Therefore, it suffices to show that $\calM$ is indeed a local realizer of $P$.
  Since $L \in \calM$ and $L$ is a linear extension of $P$, it suffices to show that for every pair of incomparable elements in $P$, there are two partial linear extensions in $\calM$ such that in one of them one of the elements is higher and in the other, the other element is higher.
  Let $[a,b]$ and $[c,d]$ be two distinct elements of $P$.
  If both are in $P_-$ or $P_+$, then the above follows from induction (recall that $L_-$ and $L_+$ are now subposets of $L$).
  If both are in $X$, then the order of $[a,b]$ and $[c,d]$ is opposite in $M_-$ and $L$ by definition.
  It is not possible that $[a,b]$ is in $P_-$ and $[c,d]$ is in $P_+$, since then, $b < m < c$, and so, $[a,b] < [c,d]$ in $P$, which is false.
  Thus, assume that $[a,b]$ is in $P_-$ and $[c,d]$ in $X$ (the case of $[a,b]$ in $X$ and $[c,d]$ in $P_+$ is symmetric).
  First, we have $[a,b] < [c,d]$ in $L$.
  On the other hand, since $c \leq b < d$ in $\mathbb{R}$, $[c,d] < [a,b]$ in $M_-$.
  This ends the proof of the lemma.
\end{proof}

\noindent\textbf{Proof of Theorem \ref{thm:interval}.}
Let $n$ be a positive integer.
By~\Cref{lemma:dense_interval_orders}, it suffices to show that $U_n$ is dense.
Let $s$ and $t$ be real numbers with $s\leq t$.
Since all endpoints of intervals in the representation of $U_n$ are positive integers, we may assume that $s$ and $t$ are positive integers.
Let $X$ be the subposet of $U_n[s,t]$ induced by all the intervals containing $m = \left\lfloor(s+t) \slash 2 \right\rfloor$.
Clearly, $X$ is an antichain, hence, it suffices to show that $|X| \geq \frac{1}{2}|U_n[s,t]|$.
The inequality holds if $t = s$; thus, we assume that $s < t$.
Observe that $|X| = (m-s)(t-m) + t-s$.
The inequality
\[|X| = (m-s)(t-m) + t-s \geq \frac{1}{2}\binom{t-s+1}{2} = \frac{1}{2}|U_n[s,t]|\]
can be simplified to
\[-4m^2 + 4m(s+t) \geq (s+t)^2 - 3(t-s).\]
When $m = \frac{s+t}{2}$, we obtain
\[-4m^2 + 4m(s+t) = (s+t)^2,\]
and when $m = \frac{s+t-1}{2}$, we obtain
\[-4m^2 + 4m(s+t) = (s+t)^2 -1.\]
In both cases, the required inequality holds.\hfill$\square$

\subsection{Two levels in a Boolean lattice}
In this subsection, we prove~\Cref{thm:incidence_orders}.

\noindent\textbf{Proof of Theorem \ref{thm:incidence_orders}.} 
Let $k$ and $n$ be positive integers with $2 \leq k \leq n$.
We begin by defining the two linear extensions $L_1$ and $L_2$ of $\calB_n(1,k)$.
Let $L_1$ be obtained in the following way.
We start with the singletons of elements of $[n]$ sorted increasingly.
For every $Y \in \binom{[n]}{k}$, insert $Y$ as low as possible, that is, for every $x \in [n]$, we have $\{x\} < Y$ in $L_1$ when $x \leq \max Y$ and $Y < \{x\}$ in $L_1$ otherwise.
We settle the relations in $L_1$ between elements of $\binom{[n]}{k}$ arbitrarily so that $L_1$ is a total order.
Next, let $L_2$ be obtained symmetrically as follows.
We start with the singletons of elements of $[n]$ sorted decreasingly.
For every $Y \in \binom{[n]}{k}$, insert $Y$ as low as possible, that is, for every $x \in [n]$, we have $\{x\} < Y$ in $L_2$ when $\min Y \leq x$ and $Y < \{x\}$ in $L_2$ otherwise.
We settle the relations in $L_2$ between elements of $\binom{[n]}{k}$ arbitrarily so that $L_2$ is a total order.
Additionally, let $K$ be a partial linear extension of $\calB_n(1,k)$ on all the elements of $\binom{[n]}{k}$ such that the order is reversed with respect to $L_2$.
More precisely, for all $Y,Z \in \binom{[n]}{k}$ with $Y \neq Z$, $Y < Z$ in $K$ if and only if $Z < Y$ in $L_2$.

Let $\ell \in [n-2]$.
We define the family $\calX_\ell$ as all the sets $X \in \binom{[n]}{k}$ such that $\ell \in X$, $\ell+1 \notin X$, and $\ell < \max X$.
Next, we define a partial linear extension $M_\ell$ starting with all singletons $\{x\}$ for integers $x$ with $\ell+1 \leq x \leq n-1$ sorted decreasingly.
We insert all sets from $\calX_\ell$ as low as possible in $M_\ell$, breaking ties arbitrarily.

We claim that $\calM = \{L_1,L_2,K,M_1,\dots,M_{n-2}\}$ is a local realizer of $\calB_n(1,k)$.
Let $x \in [n]$ and $X \in \binom{[n]}{k}$.
Since $\calM$ contains $L_1$, $L_2$, and $K$, it suffices to show that if $x \notin X$, then there exists $M \in \calM$ such that $X < \{x\}$ in $M$.
Thus, we indeed assume that $x \notin X$.
If $x < \min X$, then $X < \{x\}$ in $L_2$.
If $\max X < x$, then $X < \{x\}$ in $L_1$.
Therefore, we assume that $\min X < x < \max X$.
Let $\ell$ and $\ell'$ be consecutive elements in $X$ such that $\ell < x < \ell'$.
Note that in $M_\ell$, we have $X < \{\ell'-1\} \leq \{x\} \leq \{\ell+1\}$.
In particular, $X < \{x\}$ in $M_\ell$, as desired.

We conclude by computing $\rsize{\calM}$.
The singletons $\{1\}$ and $\{n\}$ appear only in $L_1$ and $L_2$.
For an integer $x$ with $2 \leq x \leq n-1$, the singleton $\{x\}$ appears in $L_1$, $L_2$, and $M_{1},\dots,M_{x-1}$, altogether $x+1$ times. 
Every element of $\binom{[n]}{k}$ appears in $L_1$, $L_2$, $K$, and at most $k-1$ remaining partial linear extensions of $\calM$, altogether at most $k+2$ times.
Summarizing, we obtain
\begin{align*}
    \rdim(1,k;n) \leq \rsize{\calM} &\leq \frac{2 + 2+ \sum_{x=2}^{n-1} (x+1) + (k+2)\binom{n}{k} }{n + \binom{n}{k}}\\
    &\leq \frac{1 + \binom{n+1}{2} + (k+2)\binom{n}{k}}{n + \binom{n}{k}}\\
    &\leq \frac{\binom{n+1}{2}}{n + \binom{n}{k}} + (k+2) - \frac{(k+2)n - 1}{n + \binom{n}{k}} < k+3.
\end{align*}
This completes the proof.
\hfill$\square$

\subsection{Random posets}
In this subsection, we study the growth rate of relative dimension of random bipartite posets, namely, we prove~\Cref{thm:random}.
Throughout this subsection, we fix $p$ with $0 < p < 1$.
The upper bound in~\Cref{thm:random} follows from an analogous result on local dimension shown by Kim et al.~\cite{DBLP:journals/ejc/KimMMSSUW20} as local dimension always upper-bounds relative dimension.
Therefore, it suffices to show the lower bound.
First, we need some more notation.

Let $n$ be a positive integer and let $P$ be an $n$-bipartite poset.
We define the \defin{balanced independence number} of $P$ as the maximum nonnegative integer $i$ such that there exist $A' \subset A_n$ and $B' \subset B_n$ with $|A'| = |B'| = i$ such that for all $a \in A'$ and $b \in B'$, we have $a \parallel b$ in $P$.
A pair $(a,b)$ of incomparable elements in $P$ is called \defin{relevant} if $a \in A_n$ and $b \in B_n$.
Within the next two lemmas, we establish two properties of $R(n,p)$ that hold a.a.s.
Next, we will show that these two properties are enough for a poset that can be a value of $R(n,p)$ to have large relative dimension.

Set $c = 1+2\slash (-\log(1-p))$.

\begin{lemma}\label{lem:bin-of-Rnp}
    The balanced independence number of $R(n,p)$ is less than $c \log n$ a.a.s.
\end{lemma}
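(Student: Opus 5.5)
We will use the first moment method: bound the expected number of ``bad'' configurations and apply Markov's inequality. Let $i = \lceil c \log n \rceil$. If the balanced independence number of $R(n,p)$ is at least $c\log n$, then it is at least $i$. Hence there are subsets $A' \subseteq A_n$ and $B' \subseteq B_n$ with $|A'|=|B'|=i$ such that all $i^2$ pairs $(a,b)\in A'\times B'$ are incomparable. For a fixed choice of $(A',B')$, independence of the trials gives that this event has probability exactly $(1-p)^{i^2}$. The number of choices of $(A',B')$ is $\binom{n}{i}^2 \le n^{2i}$. A union bound therefore gives
\[
\Pr\bigl[\text{balanced independence number} \ge c\log n\bigr] \le n^{2i}(1-p)^{i^2} = 2^{\,2i\log n + i^2\log(1-p)} = 2^{\,i\,(2\log n - i\,(-\log(1-p)))}.
\]

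The remaining step is to check that the exponent tends to $-\infty$. Write $q = -\log(1-p) > 0$. Since $i \ge c\log n = \log n + 2\log n / q$, we get
\[
i\,q \ge q\log n + 2\log n .
\]
Thus $2\log n - iq \le -q\log n$, and the exponent is at most $-i\,q\log n \le -c\,q\,(\log n)^2$, which tends to $-\infty$. So the probability tends to $0$, and the balanced independence number is less than $c\log n$ a.a.s. The ``$1+$'' in the definition of $c$ is precisely what produces this strict negative margin.

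There is no real obstacle here. The only care needed is with rounding: we work with $i=\lceil c\log n\rceil$, which is what lets an integer-valued independence number be compared against the real threshold $c\log n$. Edge effects such as $i>n$ are harmless, since then the event is empty. One should also note that the definition uses $A'\subset A_n$; if this means proper subsets it only shrinks the event, so the bound still applies.
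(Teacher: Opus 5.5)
Your proof is correct and is the same first-moment argument the paper uses: a union bound over pairs $(A',B')$ gives probability at most $n^{2i}(1-p)^{i^2}$, and you show that the exponent $2i\log n + i^2\log(1-p)$ tends to $-\infty$ for $i \ge c\log n$. The differences are cosmetic. You fix $i=\lceil c\log n\rceil$ explicitly, which the paper leaves implicit, and you bound the exponent by $-c\,q(\log n)^2$ with $q=-\log(1-p)$, where the paper uses $m^2\bigl(2/c+\log(1-p)\bigr)$.
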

\begin{proof}
    Let $n$ be a positive integer.
    Let $I_n$ be the random variable that is the balanced independence number of $R(n,p)$.
    Let $m$ be a positive integer.
    For each pair of sets $A' \subset A_n$ and $B' \subset B_n$ with $|A'| = |B'| = m$, we denote by $X_{n,m}(A',B')$ the indicator random variable of the event that for all $a \in A'$ and $b \in B'$, we have $a \parallel b$ in $R(n,p)$.
    Note that $P(X_{n,m}(A',B') = 1) = E(X_{n,m}(A',B')) = (1-p)^{m^2}$.
    Define 
    \[\textstyle X_{n,m} = \sum_{A' \subset A_n, B' \subset B_n, |A'| = |B'| = m} X_{n,m}(A',B').\]
    By linearity of expectation,
    \[E(X_{n,m}) = \binom{n}{m}^2 \cdot (1-p)^{m^2} \leq n^{2m} \cdot (1-p)^{m^2} = 2^{2m \log n + m^2 \log (1-p)}.\]
    Note that $I_n \geq m$ if and only if $X_{n,m} > 0$.
    We show that $X_{n,m} = 0$ for $m \geq c \log n$ a.a.s., which, as noted above, means that the balanced independence number of $R(n,p)$ is less than $c \log n$ a.a.s.
    Since $P(X_{n,m} > 0) \leq E(X_{n,m})$, it suffices to show that if $m \geq c \log n$, then $2m \log n + m^2 \log (1-p) \rightarrow - \infty$ as $n \rightarrow \infty$.
    Note that $c > 0$. 
    Using $\log n \leq m \slash c$, we obtain
    \[2m \log n + m^2 \log (1-p) \leq m^2(2 \slash c + \log(1-p)).\]
    The right-hand side of the above inequality tends to $- \infty$ as long as $2 \slash c + \log(1-p) < 0$.
    Equivalently, $2 \slash (-\log(1-p)) < c$, which is true by definition.
    This completes the proof.
\end{proof}

\begin{lemma}\label{lem:relevant-Rnp}
    The number of relevant incomparable pairs in $R(n,p)$ is at least $(1-p)n^2 \slash 2$ a.a.s.
\end{lemma}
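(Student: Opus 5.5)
The count of relevant incomparable pairs is a binomial random variable, so the plan is a second-moment (Chebyshev) concentration argument. Let $Y_n$ be the number of relevant incomparable pairs in $R(n,p)$. For each of the $n^2$ pairs $(a,b)\in A_n\times B_n$, let $Z_{a,b}$ be the indicator of $a\parallel b$. By construction these are independent Bernoulli variables with success probability $1-p$, so $Y_n=\sum_{a,b}Z_{a,b}$ is distributed as $\mathrm{Bin}(n^2,1-p)$. This gives
\[E(Y_n)=(1-p)n^2 \quad\text{and}\quad \mathrm{Var}(Y_n)=p(1-p)n^2.\]

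We want to show that $P\bigl(Y_n<(1-p)n^2/2\bigr)\to 0$. The event $Y_n<(1-p)n^2/2$ implies $|Y_n-E(Y_n)|>(1-p)n^2/2$. Chebyshev's inequality then bounds its probability by
\[\frac{p(1-p)n^2}{(1-p)^2n^4/4}=\frac{4p}{(1-p)n^2},\]
which tends to $0$ as $n\to\infty$ because $p$ is fixed with $0<p<1$. This is exactly the a.a.s.\ statement.

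The only point needing care is identifying relevant incomparable pairs with the non-successes of the independent trials. This uses the definition of an $n$-bipartite poset: for $a\in A_n$ and $b\in B_n$, either $a<b$ or $a\parallel b$, and the relation $b<a$ never occurs. No other dependence issues arise. A Chernoff bound would give an exponentially small failure probability, but Chebyshev already suffices, so I do not expect any real obstacle here.
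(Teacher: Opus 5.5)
Your proposal is correct and follows essentially the same route as the paper: write the count as a sum of $n^2$ independent indicators of $a \parallel b$ with mean $(1-p)n^2$, then apply Chebyshev's inequality. Your version is slightly more explicit, since you compute the variance $p(1-p)n^2$ and the resulting failure bound $4p/((1-p)n^2)$, which the paper leaves implicit.
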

\begin{proof}
    Let $n$ be a positive integer.
    For each pair of elements $a \in A_n$ and $b \in B_n$, we define $X_n(a,b)$ as the indicator function of the event that $a \parallel b$ in $R(n,p)$.
    Note that $X_n = \sum_{a \in A_n, b \in B_n} X_n(a,b)$ is the number of relevant incomparable pairs in $R(n,p)$.
    Observe also that $E(X_n) = (1-p) n^2$.
    Applying Chebyshev's inequality, we obtain $P(X_n > (1-p)n^2 \slash 2) \rightarrow 1$ as $n \rightarrow \infty$, as desired.
\end{proof}

\noindent\textbf{Proof of Theorem \ref{thm:random}.}
In~\cref{lem:bin-of-Rnp,lem:relevant-Rnp}, we proved that the balanced independence number of $R(n,p)$ is less than $c \log n$ a.a.s., and that the number of relevant incomparable pairs in $R(n,p)$ is at least $(1-p)n^2 \slash 2$ a.a.s.
Therefore, it suffices to prove a lower bound on the relative dimension of a poset that can be a value of $R(n,p)$ assuming the above properties.

Let $n$ be a positive integer and let $P$ be an $n$-bipartite poset such that the balanced independence number of $P$ is less than $c \log n$ and the number of relevant incomparable pairs in $P$ is at least $(1-p)n^2 \slash 2$.
Let $\mathcal{M}$ be a local realizer of $P$.
We say that an incomparable pair of elements $(a,b)$ in $P$ is \defin{reversed} in some $M \in \calM$ if $b < a$ in $M$.
For each $M \in \mathcal{M}$, we define \defin{$r(M)$} to be the number of relevant incomparable pairs in $P$ that are reversed in $M$.
Since $\mathcal{M}$ is a local realizer of $P$, every relevant incomparable pair is reversed in some $M \in \calM$.
Therefore, $\sum_{M \in \mathcal{M}}r(M)$ is at least the number of relevant incomparable pairs in $P$, i.e.\ at least $(1-p)n^2 \slash 2$.

On the other hand, we can upper bound $r(M)$ for each $M \in \mathcal{M}$.
Indeed, let $M \in \calM$.
We claim that $r(M) \leq |M| \cdot c\log n$.
Let $\alpha$ be the number of elements of $M$ in $A_n$ and let $\beta$ be the number of elements of $M$ in $B_n$.
Note that $|M| = \alpha + \beta$, and we always have $r(M) \leq \alpha\beta$.
In particular, if $\alpha < c \log n$ or $\beta < c \log n$, then $r(M) < |M| \cdot c \log n$, as desired.
Therefore, we assume that $\alpha\geq c \log n$ and $\beta \geq c \log n$.
Let $a_1,\dots,a_\alpha$ be the elements of $M$ in $A_n$ named so that if $i < j$, then $a_j < a_i$ in $M$.
Let $b_1,\dots,b_\beta$ be the elements of $M$ in $B_n$ named so that if $i < j$, then $b_i < b_j$ in $M$.
Suppose that for $i \in [\alpha]$ and $j \in [\beta]$, the pair $(a_i,b_j)$ is an incomparable pair in $P$ reversed in $M$.
Then, since the balanced independence number of $P$ is less than $c \log n$, we have $i < c \log n$ or $j < c \log n$.
Now, for each $i \in [\alpha]$, let $r(M,i)$ be the number of indices $j \in [\beta]$ such that $(a_i,b_j)$ is an incomparable pair reversed in $M$; note that $r(M,i) \leq \beta$.
In particular, $r(M) = \sum_{i \in [\alpha]} r(M,i)$.
It follows that for each $i \in [\alpha]$ with $i \geq c \log n$, we have $r(M,i) < c \log n$.
Altogether, we obtain
\[r(M) = \sum_{i \in [\alpha]} r(M,i) \leq c \log n \cdot \beta + \alpha \cdot c \log n \leq |M| \cdot c \log n.\]
Summarizing, we have
\[(1-p)n^2 \slash 2 \leq \sum_{M \in \mathcal{M}} r(M) \leq \sum_{M \in \calM} |M| \cdot c \log n.\]
By rearranging the terms, we finally get
\[\frac{(1-p)}{4c} \cdot \frac{n}{\log n} \leq \frac{\sum_{M \in \calM} |M|}{2n} = \rsize{\calM}.\]
This completes the proof.\hfill$\square$

\subsection{Boolean lattice}
In this subsection, we study relative dimension of the Boolean lattices (\Cref{thm:boolean}).
We show that relative dimension can be arbitrarily big but it is strictly less than the order of the lattice.
For the upper bound, we need the subadditivity of relative dimension.
We use it in a similar way as it was already used while studying Boolean dimension of a Boolean lattice~\cite{BHLMM24}.
Namely, it suffices to find a small positive integer $N$ with $\rdim(\calB_{N}) < N$ and exploit the structure of Boolean lattices.

\medskip

\noindent\textbf{Proof of Lemma \ref{thm:subadd}.}
Let $P$ and $Q$ be posets and let $\cgL=\{L_1,\dots, L_s\}$ and $\cgM=\{M_1,\dots, M_t\}$ be local realizers of $P$ and $Q$.
Fix a linear extension $L$ of $P$ and a linear extension $M$ of $Q$.
For each $i \in [s]$, define $L_i'$ as a partial linear extension of $P \times Q$ on the elements $\{(p,q) \mid p \in L_i, q \in Q\}$ with $(p_1,q_1) < (p_2,q_2)$ in $L_i'$ if and only if $p_1 < p_2$ in $L_i$ or $p_1 = p_2$ and $q_1 < q_2$ in $M$.
Dually, define for each $j \in [t]$, a partial linear extension $M_j'$ of $P \times Q$ on the elements $\{(p,q) \mid p \in P, q \in M_j\}$ with $(p_1,q_1) < (p_2,q_2)$ in $M_j'$ if and only if $q_1 < q_2$ in $M_j$ or $q_1 = q_2$ and $p_1 < p_2$ in $L$.
Let $\cgN=\{L_1',\dots, L_s'\} \cup \{M_1',\dots, M_t'\}$.
We claim that $\cgN$ is a local realizer of $P \times Q$.
To see this, let $(p_1,q_1)$ and $(p_2,q_2)$ be two elements of $P \times Q$.
Since $\calL$ is a local realizer of $P$, there is $i \in [s]$ such that $p_1$ and $p_2$ are in the ground set of $L_i$, and therefore, $(p_1,q_1)$ and $(p_2,q_2)$ are in the ground set of $L_i'$.
If $(p_1,q_1) < (p_2,q_2)$ in $P \times Q$, then $(p_1,q_1) < (p_2,q_2)$ in $N$ for every $N \in \calN$ containing these elements by definition.
Now let $(p_1,q_1)$ and $(p_2,q_2)$ be incomparable in $P \times Q$.
It suffices to show that there are $N_1,N_2 \in \calN$ with $(p_1,q_1) > (p_2,q_2)$ in $N_1$ and $(p_1,q_1) < (p_2,q_2)$ in $N_2$.
We show the existence of $N_1$.
The existence of $N_2$ follows by a symmetric argument.
If $p_1$ and $p_2$ are incomparable in $P$, there is $i \in [s]$ with $p_1 > p_2$ in $L_i$ and so $(p_1,q_1) > (p_2,q_2)$ in $L_i'$.
Dually, if $q_1$ and $q_2$ are incomparable in $Q$, there is $j \in [t]$ with $q_1 > q_2$ in $M_j$ and so $(p_1,q_1) > (p_2,q_2)$ in $M_j'$.
If $p_1$ and $p_2$ are comparable in $P$ and $q_1$ and $q_2$ are comparable in $Q$, it follows that $p_1 < p_2$ and $q_1 > q_2$ or $p_1 > p_2$ and $q_1 < q_2$.
In the former case, there is $j \in [t]$ with $q_1 > q_2$ in $M_j$ and thus $(p_1,q_1) > (p_2,q_2)$ in $M_j'$.
In the latter case, there is $i \in [s]$ with $p_1 > p_2$ in $L_i$ and thus $(p_1,q_1) > (p_2,q_2)$ in $L_i'$.
Therefore, $\cgN$ is indeed a local realizer of $P \times Q$.
Finally, suppose that $\rdim(P) = \|\calL\|_r$ and $\rdim(Q) = \|\calM\|_r$ and observe
\begin{align*}
  \rdim(P \times Q) \leq \|\calN\|_r & = \sum_{N \in \cgN} \frac{|N|}{|P|\cdot |Q|} = \sum_{i \in [s]} \frac{|L_i|\cdot |Q|}{|P|\cdot |Q|}+\sum_{j \in [t]}\frac{|M_j|\cdot |P|}{|P|\cdot |Q|} \\
                                     & = \sum_{i \in [s]} \frac{|L_i|}{|P|}+\sum_{j \in [t]}\frac{|M_j|}{|Q|}  = \rdim(P) + \rdim(Q).
\end{align*}
This ends the proof. \hfill$\square$

\medskip

To prove the upper bound in \cref{thm:boolean}, we use the following result on the existence of a Boolean lattice as a subset of a set system.
\begin{theorem}[Gunderson, Rödl, and Sidorenko~\cite{Gunderson.1999}]
  \label{thm:boolean_helper}
  Let $n$ be a positive integer and $X$ be an $n$-element set.
  For all positive integers $d$ and $k$ with
  \[k \geq 10^d2^{-1/2^{d-1}}d^{d-1/2^d} \cdot n^{-1/2^d}\cdot 2^n,\]
 every family $\calF$ of subsets of $X$ with $|\calF| > k$ contains a subfamily isomorphic to $\calB_d$ as an inclusion-ordered poset.
\end{theorem}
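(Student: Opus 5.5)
The natural route is induction on $d$. The engine is a ``doubling'' step: if $\calF$ is dense, then for some nonempty set $D\subseteq X$ the derived family
\[\calF_D=\{F\subseteq X\setminus D : F\in\calF \text{ and } F\cup D\in\calF\}\]
is still reasonably dense inside the Boolean lattice on $X\setminus D$. Any copy $\calC$ of $\calB_{d-1}$ inside $\calF_D$ gives the copy $\calC\cup\{C\cup D : C\in\calC\}$ of $\calB_d$ inside $\calF$. This works because every member of $\calC$ avoids $D$, so the map $C\mapsto C\cup D$ is an order embedding and no new comparabilities appear. Hence it suffices to show $|\calF_D|$ exceeds the threshold for $d-1$ on the ground set $X\setminus D$. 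The loss of the square root in density at each step is exactly what produces the exponent $n^{-1/2^d}$.

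\textbf{Base case $d=1$.} A copy of $\calB_1$ is just a comparable pair. For $d=1$ the hypothesis reads $k\ge 5\cdot 2^n/\sqrt n$, and this exceeds $\binom{n}{\lfloor n/2\rfloor}$. By Sperner's theorem, any family larger than $\binom{n}{\lfloor n/2\rfloor}$ contains two comparable sets.

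\textbf{Inductive step.} I would do the following, in order.
\begin{enumerate}
\item Discard all sets whose size deviates from $n/2$ by more than about $\sqrt{n\log n}$. This costs a negligible fraction of $2^n$, which is much smaller than $k$, so the remaining family is still dense.
\item Count pairs $F\subsetneq G$ in $\calF$ with $|G\setminus F|\le s$, for a suitable $s$ of order $\sqrt n$. I would average over a uniformly random maximal chain $\emptyset=C_0\subset C_1\subset\dots\subset C_n=X$. The number of members of $\calF$ on the chain has expectation $\sum_{F\in\calF}1/\binom{n}{|F|}\gtrsim \sqrt n\,|\calF|/2^n$. Partition the chain into windows of $s$ consecutive levels and apply Cauchy--Schwarz (convexity) inside each window. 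This gives an expected number of short comparable pairs on the chain of order $\sqrt n\,(|\calF|/2^n)^2\cdot s$ up to constants.
\item Translate this chain count into $\sum_{D}|\calF_D|$ weighted by $1/\binom{n}{|F|,|D|,\dots}$. Pigeonhole on $|D|=j\le s$ and then on $D$ itself to get a single $D$ with $|\calF_D|/2^{n-j}\gtrsim c\,(|\calF|/2^n)^2\cdot(\text{poly in } n)$.
\item Apply the inductive hypothesis to $\calF_D$ on the $(n-j)$-element set $X\setminus D$.
\end{enumerate}
Writing $\delta=|\calF|/2^n\ge C_d n^{-1/2^d}$, the derived density is about $\delta^2\ge C_d^2 n^{-1/2^{d-1}}$. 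This is precisely of the shape required for $d-1$, provided $C_d^2$ dominates $C_{d-1}$ together with the polynomial factors and the change from $n$ to $n-j$. The form $10^d d^{d-1/2^d}2^{-1/2^{d-1}}$ of the constant suggests it was reverse-engineered from exactly such a recursion $C_{d}^2\approx 10\,d\,C_{d-1}$.

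The main obstacle is step (iii) with honest constants. The difference sets $D$ of a given size $j$ are numerous, namely $\binom{n}{j}$, and the chain-weighting must be undone carefully so that the density of $\calF_D$ relative to $2^{n-j}$, not merely its absolute size, is large. Also, $n-j$ rather than $n$ appears in the recursive threshold. I would first attempt to take $s$ comparable to $d\sqrt n$, or simply $s=1$ with a level-by-level Cauchy--Schwarz on consecutive layers. With $s=1$ one gets a single element $x$ with $\calF_{\{x\}}$ dense, but only after checking that the middle-layer restriction does not kill consecutive-layer pairs. I would fall back to larger windows if the one-element version loses too much.
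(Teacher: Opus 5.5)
The paper does not prove this result; it is quoted from Gunderson, R\"odl and Sidorenko. So your sketch has to stand on its own, and as written the inductive step does not close, exactly at the point you flag.

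The frame is fine. The Sperner base case is correct, $\mathcal{C}\cup\{C\cup D : C\in\mathcal{C}\}$ is indeed a copy of $\calB_d$, and random maximal chains with convexity are the right tools. What fails is the invariant. For $F\subseteq X\setminus D$ with $|F|=i$ and $|D|=j\geq 1$, the probability that $F$ and $F\cup D$ both lie on a random maximal chain is $1/\bigl(\binom{n}{j}\binom{n-j}{i}\bigr)$. Hence the chain count in (ii) controls $\sum_{|D|=j}h_{n-j}(\mathcal{F}_D)/\binom{n}{j}$, where $h_m(\mathcal{G})=\sum_{G\in\mathcal{G}}\binom{m}{|G|}^{-1}$ is the Lubell mass, and not $|\mathcal{F}_D|/2^{n-j}$.

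Converting one into the other requires $\binom{n-j}{|F|}\geq c\,2^{n-j}/\sqrt{n}$ with an absolute constant $c$, i.e.\ all relevant sets within $O(\sqrt{n})$ of the middle. After your truncation at distance $\sqrt{n\log n}$, two things go wrong:
\begin{itemize}
\item the binomial coefficients at the edge of the band are smaller than the central one by a factor polynomial in $n$;
\item the window Cauchy--Schwarz over that band already loses a factor $\sqrt{\log n}$ in (ii).
\end{itemize}
Such losses cannot be ``dominated by $C_d^2$''. Any $n$-dependent loss per step changes the exponent, and $n^{-1/2^d}$ comes out only if the $\sqrt{n}$ gained in (ii) cancels the $\sqrt{n}$ lost in (iii) up to an absolute constant. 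Truncating at $t\sqrt{n}$ instead does not rescue this. Keeping half of $\mathcal{F}$ forces $e^{-2t^2}\lesssim\delta$, so the weights vary by a factor of order $1/\delta$ across the band, and the argument yields only about $\delta^3$.

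The fallback $s=1$ is false outright: the family of all even-size sets has density $1/2$, yet $\mathcal{F}_{\{x\}}=\emptyset$ for every $x$. Also, the constant is not of the shape $C_d^2\approx 10d\,C_{d-1}$; the ratio $C_d^2/C_{d-1}$ grows like $(10d)^{d+1}$.

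The missing idea is to make the Lubell mass itself the quantity you induct on. Let $h=h_n(\mathcal{F})$ and let $N$ be the number of members of $\mathcal{F}$ on a random maximal chain. Then $\mathbb{E}N=h$, any two members on the chain are comparable, and convexity gives
\[
\binom{h}{2}\leq\mathbb{E}\binom{N}{2}=\sum_{\emptyset\neq D\subseteq X}\frac{h_{n-|D|}(\mathcal{F}_D)}{\binom{n}{|D|}}\leq n\max_{D\neq\emptyset}h_{n-|D|}(\mathcal{F}_D).
\]
Take the LYM inequality as the base case ($h>1$ forces a comparable pair). Induction then shows that $h_n(\mathcal{F})>g_d(n)$ forces a copy of $\calB_d$, for a nondecreasing $g_d(n)=O\bigl(n^{1-2^{1-d}}\bigr)$ with an absolute implied constant. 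No truncation is needed anywhere.

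Cardinality enters only once. Write $f_k$ for the number of $k$-element members of $\mathcal{F}$, and cut $\{0,\dots,n\}$ into blocks of about $\sqrt{n}$ consecutive levels. By unimodality of $k\mapsto\binom{n}{k}$, some block $\{a,\dots,b\}$ satisfies $\sum_{k=a}^{b}f_k/\binom{n}{k}\geq c\,\delta\sqrt{n}$, where $\delta=|\mathcal{F}|/2^n$. That sum is exactly the expected Lubell mass of the trace $\{F\setminus C_a : F\in\mathcal{F},\ C_a\subseteq F\subseteq C_b\}$ on the interval $[C_a,C_b]\cong\calB_{b-a}$ of a random maximal chain $C$.

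Applying the Lubell statement in that interval yields $\calB_d$ as soon as $\delta\geq K(\sqrt{n})^{-2^{1-d}}=Kn^{-1/2^d}$ for an absolute constant $K$. For $d\geq 2$ this is stronger than the stated bound, and $d=1$ is your Sperner case.
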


\begin{figure}[t]
  \hfill
  \includegraphics[valign=c]{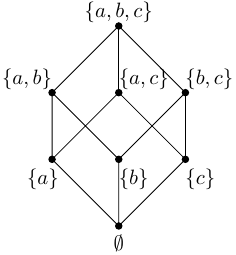}
  \hfill
  \begin{minipage}[c]{.65\linewidth}
    \begin{itemize}
      \item[$L_1:$] $\emptyset < \{a\} < \{b\} < \{a,b\} < \{c\} < \{a,c\} \\\phantom{\emptyset}< \{b,c\} < \{a,b,c\}$
      \item[$L_2:$] $\{c\} < \{b\} < \{b,c\} < \{a\} < \{a,c\} < \{a,b\}$
      \item[$L_3:$] $\{a,c\} < \{b\}$
    \end{itemize}
  \end{minipage}
  \hfill\null

  \caption{$\calB_3$ and a local realizer that witnesses its relative dimension 2.}
  \label{fig:B3}
\end{figure}

\smallskip
\noindent
\textbf{Proof of Theorem~\ref{thm:boolean}.}
We first prove the lower bound on the relative dimension of Boolean lattices.
Let $n$ be a large integer (specified later) and let $\cgM=\{M_1,\dots ,M_t\}$ be a local realizer of $\calB_n$.
For each element $A \subset [n]$, let $c(A)$ be the number of partial linear extensions in $\cgM$ that contain $A$.
Let $k=2\cdot \|\calM\|_r$ and let $B'$ be the poset that results from removing all the elements $A \subset [n]$ with $c(A) \geq k$ from $\calB_n$.
Then,
\begin{align*}
  \|\calM\|_r & = \sum_{i \in [t]}\frac{|M_i|}{|\calB_n|}     = \sum_{A \subset [n]}\frac{c(A)}{|\calB_n|} \geq \sum_{\substack{A \subset [n] \\c(A)\geq k}}\frac{k}{|\calB_n|}\\
              & =(|\calB_n|-|B'|)\cdot \frac{k}{|\calB_n|} = (2^n-|B'|)\cdot \frac{\|\calM\|_r}{2^{n-1}}.
\end{align*}
Therefore, we obtain $|B'| \geq 2^{n-1}$.

Fix $d= \left\lfloor\log\log((n/2)^{1/2})-\log\log\log((n/2)^{1/2})-\log\log\log\log((n/2)^{1/2})\right\rfloor$.
By Theorem \ref{thm:boolean_helper}, $B'$ contains a subposet isomorphic to $\calB_d$ as for $n$ large enough, we have
$$|B'|\geq 2^{n-1} \geq 10^d2^{-1/2^{d-1}}d^{d-1/2^d} \cdot n^{-1/2^d}\cdot 2^n.$$
The partial linear extensions in $\cgM$ restricted to the elements of $B'$ form a local realizer $\calM'$ with $\|\calM'\| \leq 2 \cdot \|\calM\|_r$.
Since $B'$ contains $\calB_d$, by~\cite[Theorem~6]{DBLP:journals/ejc/KimMMSSUW20} we have
\[\|\calM'\| \geq \ldim(B') \geq \ldim(\calB_d) \geq  \frac{d}{2e \log (d)}.\]
Altogether, assuming that $\calM$ is optimal, we obtain
\[\rdim(\calB_n) = \|\calM\|_r \geq \frac{\|\calM'\|}{2} \geq \frac{d}{4e \log (d)} \geq c \cdot  \frac{\log\log n}{\log\log\log n}\]
where $c$ is an absolute constant (independent of $n$).

Finally, we proceed with the upper bound.
First, observe that $\rdim(\calB_1) \leq 1$ and $\rdim(\calB_2) \leq 3 \slash 2$.
Next, note that $\rdim(\calB_3) \leq 2$ as witnessed by a local realizer in~\cref{fig:B3}.
Let $n$ be an integer with $n \geq 3$.
Since $\calB_n = \calB_3 \times \dots \times \calB_3 \times \calB_r = (\calB_3)^{\lfloor n \slash 3\rfloor} \times \calB_r$ where $n \equiv r \bmod 3$ and $r \in \{0,1,2\}$, by~\Cref{thm:subadd}, we have
$$\rdim(\calB_n) \leq \begin{cases}
    n/3 \cdot \rdim(\calB_3)                              & \text{if } n \equiv 0 \mod 3, \\
    \lfloor n/3\rfloor\cdot \rdim(\calB_3)+\rdim(\calB_1) & \text{if }n \equiv 1 \mod 3,  \\
    \lfloor n/3\rfloor\cdot \rdim(\calB_3)+\rdim(\calB_2) & \text{if }n \equiv 2 \mod 3,
  \end{cases}$$
which gives the bound. \hfill$\square$

\section{Open Problems}
\label{sec:open-problems}

As the relative dimension is a new concept, there is no prior research on the behavior of the parameter.
Thus, we want to point out a few directions, which are, in our opinion, worthwhile to investigate further.
For canonical interval orders, we have shown that the relative dimension is bounded. If relative dimension were monotone under taking subposets, this would imply that the relative dimension of all interval orders is bounded. However, this is not the case, which makes it an interesting question to explore further.

\smallskip
\noindent
\textbf{Question 1:} Is the relative dimension bounded for the family of \textbf{all} interval orders?

\smallskip
\noindent
We showed that the relative dimension of the Boolean lattice is unbounded.
However, the lower bound that we provide is very far from our upper bound.
It is thus desirable to achieve better bounds and narrow this gap, which gives rise to the next question.

\smallskip
\noindent
\textbf{Question 2:} What is the asymptotic behavior of relative dimension of Boolean lattices?

\smallskip
\noindent
For posets with a planar diagram, it is well-known that both dimension and local dimension are unbounded~\cite{Kelly.1981,BGT20}, i.e., for any integer $k$, there is a poset with a planar diagram such that its (local) dimension is larger than $k$.
It is thus natural to ask whether the relative dimension of posets with planar diagrams is unbounded as well.
A good starting point could be to investigate the construction showing that local dimension of posets with planar diagrams is unbounded~\cite{BGT20}.

\smallskip
\noindent
\textbf{Question 3:} Is the relative dimension of posets with planar diagrams bounded? 

\smallskip
\noindent
Let $P$ be a poset with a connected cover graph.
We call a \defin{block} of $P$ a subposet induced by a maximal 2-connected subset of the cover graph.
If every block of $P$ has dimension at most $d$, then the dimension of $P$ is at most $d+2$~\cite{Trotter.2015}.
The same is not true for local dimension: the local dimension of a poset is not bounded with respect to the local dimension of its blocks.
It is thus interesting to investigate whether a similar result holds for the relative dimension.

\smallskip
\noindent
\textbf{Question 4:} Is there a function $f$ such that for every poset $P$, if all its blocks have relative dimension at most $d$, then the relative dimension of $P$ is at most $f(d)$?

\smallskip
\noindent
We noted that the relative dimension of a poset is bounded from above by the local dimension.
Furthermore, we showed that the two can be arbitrarily far away from each other, as there are posets like the canonical interval orders for which the relative dimension is bounded but the local dimension is unbounded.
There is a family of posets with bounded local dimension, and thus bounded relative dimension, but unbounded Boolean dimension~\cite{Trotter.2017}.
For the other direction, there is also a family of posets with bounded Boolean dimension but unbounded local dimension.
The only remaining question requiring an answer in this line of research is the following.

\smallskip
\noindent
\textbf{Question 5:} Is there a family of posets $P_n$ with $\bdim(P_n) < c$ and $\rdim(P_n) \to \infty$?

\bibliographystyle{amsplain}
\bibliography{paper.bib}

\end{document}

%% file: macros.tex
\usepackage{graphicx}
\usepackage[export]{adjustbox}
\usepackage{cleveref}

\colorlet{defcolor}{ForestGreen}
\newcommand{\defin}[1]{\relax\ifmmode{\color{defcolor}{#1}}\else{\emph{\textcolor{defcolor}{#1}}}\fi}

\newcommand{\q}[1]{``#1''}

\newtheorem{theorem}{Theorem}
\newtheorem{lemma}[theorem]{Lemma}

\theoremstyle{definition}

\theoremstyle{remark}

\newcommand{\N}{\mathbb{N}}

\newcommand{\cgA}{\mathcal{A}}

\newcommand{\cgB}{\mathcal{B}}
\newcommand{\cgC}{\mathcal{C}}

\newcommand{\cgL}{\mathcal{L}}
\newcommand{\cgM}{\mathcal{M}}
\newcommand{\cgN}{\mathcal{N}}

\newcommand{\Oh}{\mathcal{O}}

\newcommand{\calB}{\mathcal{B}}

\newcommand{\calF}{\mathcal{F}}

\newcommand{\calL}{\mathcal{L}}
\newcommand{\calM}{\mathcal{M}}
\newcommand{\calN}{\mathcal{N}}

\newcommand{\calX}{\mathcal{X}}

\newcommand{\bfthree}{\mathbf{3}}

\newcommand{\ldim}{\operatorname{ldim}}
\newcommand{\rdim}{\operatorname{rdim}}
\newcommand{\fdim}{\operatorname{fdim}}
\newcommand{\bdim}{\operatorname{bdim}}
\newcommand{\symdiff}{\operatorname {\Delta }}

\let\subset\subseteq
\let\cref\Cref

\let\le\leqslant

\let\leq\leqslant
\let\geq\geqslant

%% file: paper.bib
@article{DBLP:journals/ejc/KimMMSSUW20,
  author    = {Jinha Kim and
               Ryan R. Martin and
               Tom{\'{a}}{\v s} Masa{\v r}{\'{\i}}k and
               Warren Shull and
               Heather C. Smith and
               Andrew J. Uzzell and
               Zhiyu Wang},
  title     = {On difference graphs and the local dimension of posets},
  journal   = {European Journal of Combinatorics},
  volume    = {86},
  pages     = {103074},
  year      = {2020},
  url       = {https://doi.org/10.1016/j.ejc.2019.103074},
  doi       = {10.1016/J.EJC.2019.103074},
  bibsource = {dblp computer science bibliography, https://dblp.org},
        note = {\href{http://arxiv.org/abs/1803.08641}{arXiv:1803.08641}}
}

@article{DusMil41,
  title     = {Partially ordered sets},
  author    = {Dushnik, Ben and Miller, Edwin W.},
  journal   = {American Journal of Mathematics},
  volume    = {63},
  number    = {3},
  pages     = {600--610},
  year      = {1941},
  publisher = {JSTOR}
}

@inproceedings{furedi1991interval,
  title     = {Interval orders and shift graphs},
  author    = {F{\"u}redi, Zoltán and Hajnal, P{\'e}ter and R{\"o}dl, Vojt\v{e}ch and Trotter, William T.},
  booktitle = {Colloquium Mathematical
Society J{\'a}nos Bolyai},
  volume    = {60},
  pages     = {297--313},
  year      = {1991}
}

@article{Barrera.2020,
  author    = {Fidel Barrera{-}Cruz and
               Thomas Prag and
               Heather C. Smith and
               Libby Taylor and
               William T. Trotter},
  title     = {Comparing {D}ushnik-{M}iller Dimension, {B}oolean Dimension and Local Dimension},
  journal   = {Order},
  volume    = {37},
  number    = {2},
  pages     = {243--269},
  year      = {2020},
  url       = {https://doi.org/10.1007/s11083-019-09502-6},
  doi       = {10.1007/S11083-019-09502-6},
  bibsource = {dblp computer science bibliography, https://dblp.org},
        note = {\href{http://arxiv.org/abs/1710.09467}{arXiv:1710.09467}}
}

@article{Erdös.1991,
  author    = {Paul Erd{\H{o}}s and
               Henry A. Kierstead and
               William T. Trotter},
  title     = {The Dimension of Random Ordered Sets},
  journal   = {Random Structures and Algorithms},
  volume    = {2},
  number    = {3},
  pages     = {253--275},
  year      = {1991},
  url       = {https://doi.org/10.1002/rsa.3240020302},
  doi       = {10.1002/RSA.3240020302},
  bibsource = {dblp computer science bibliography, https://dblp.org}
}

@article{Kelly.1981,
  author    = {David Kelly},
  title     = {On the dimension of partially ordered sets},
  journal   = {Discrete Mathematics},
  volume    = {35},
  number    = {1--3},
  pages     = {135--156},
  year      = {1981},
  url       = {https://doi.org/10.1016/0012-365X(81)90203-X},
  doi       = {10.1016/0012-365X(81)90203-X},
  bibsource = {dblp computer science bibliography, https://dblp.org}
}

@inbook{Trotter.2015, 
place={Cambridge}, 
title={Dimension and Cut Vertices: An Application of {R}amsey Theory}, 
booktitle={Connections in Discrete Mathematics: A Celebration of the Work of Ron Graham}, 
publisher={Cambridge University Press}, 
author={Trotter, William T. and Walczak, Bartosz and Wang, Ruidong}, 
year={2018}, 
pages={187–-199},
note = {\href{http://arxiv.org/abs/1505.08162}{arXiv:1505.08162}}
}

@article{Trotter.2017,
  author    = {William T. Trotter and
               Bartosz Walczak},
  title     = {Boolean Dimension and Local Dimension},
  journal   = {Electronic Notes in Discrete Mathematics},
  volume    = {61},
  pages     = {1047--1053},
  year      = {2017},
  url       = {https://doi.org/10.1016/j.endm.2017.07.071},
  doi       = {10.1016/J.ENDM.2017.07.071},
  bibsource = {dblp computer science bibliography, https://dblp.org},
        note = {\href{http://arxiv.org/abs/1705.09167}{arXiv:1705.09167}}
}

@article{Gunderson.1999,
  author    = {David S. Gunderson and
               Vojtech R{\"{o}}dl and
               Alexander F. Sidorenko},
  title     = {Extremal Problems for Sets Forming {B}oolean Algebras and Complete Partite
               Hypergraphs},
  journal   = {Journal of Combinatorial Theory {A}},
  volume    = {88},
  number    = {2},
  pages     = {342--367},
  year      = {1999},
  url       = {https://doi.org/10.1006/jcta.1999.2973},
  doi       = {10.1006/JCTA.1999.2973},
  bibsource = {dblp computer science bibliography, https://dblp.org}
}

@article{BGT20,
	doi = {10.37236/9258},
	url = {https://doi.org/10.37236/9258},
	year = {2020},
	publisher = {The Electronic Journal of Combinatorics},
	volume = {27},
	number = {4},
	author = {Bart{\l}omiej Bosek and Jaros{\l}aw Grytczuk and William T. Trotter},
	title = {Local Dimension is Unbounded for Planar Posets},
	journal = {The Electronic Journal of Combinatorics},
        note={\href{http://arxiv.org/abs/1712.06099}{arXiv:1712.06099}}
}

@article{BHLMM24,
  title     = "Boolean dimension of a {B}oolean lattice",
  author    = "Bria{\'n}ski, Marcin and Hodor, J\k{e}drzej and La, Hoang and Micek,
               Piotr and Michno, Katzper",
  journal   = "Order",
  publisher = "Springer Science and Business Media LLC",
  volume    =  42,
  number    =  1,
  pages     = "25--36",
  month     =  apr,
  year      =  2025,
  copyright = "https://www.springernature.com/gp/researchers/text-and-data-mining",
  note = {\href{http://arxiv.org/abs/2307.16671}{arXiv:2307.16671}}
}

@Inbook{NP89,
author="Ne{\v{s}}et{\v{r}}il, Jaroslav
and Pudl{\'a}k, Pavel",
title="A Note on {B}oolean Dimension of Posets",
bookTitle="Irregularities of Partitions",
year="1989",
publisher="Springer Berlin Heidelberg",
address="Berlin, Heidelberg",
pages="137--140",
isbn="978-3-642-61324-1",
doi="10.1007/978-3-642-61324-1_12",
url="https://doi.org/10.1007/978-3-642-61324-1_12"
}

@misc{ Ueckerdt16,
  title={Local dimension}, 
    note={concept introduced and developed in Order \& Geometry Workshop, {G}u{\l}towy},
  author={Torsten Ueckerdt},
  year={2016},
}

@misc{HSlocal,
      title={Local dimension of a {B}oolean lattice}, 
      author={J\k{e}drzej Hodor and Jakub Sordyl},
      year={2025},
      eprint={2512.10413},
      archivePrefix={arXiv},
      primaryClass={math.CO},
      url={https://arxiv.org/abs/2512.10413}, 
  note = {\href{http://arxiv.org/abs/2512.10413}{arXiv:2512.10413}}
}
